\documentclass[12pt]{amsart}
\usepackage{graphicx}

\newtheorem{thm}{Theorem}[section]

\newtheorem{ex}[thm]{Example}

\theoremstyle{definition}
\newtheorem{defn}[thm]{Definition}
\theoremstyle{remark}

\numberwithin{equation}{section}

\def\v{\varphi}

\def\l{\lambda}

\def\o{\omega}

\def\G{\Gamma}
\def\O{\Omega}
\def\L{\Lambda}

\def\cal#1{\mathcal{#1}}

\begin{document}

\title[BANACH LIE ALGEBROIDS]{BANACH LIE ALGEBROIDS}%
\author{Mihai ANASTASIEI}%
\address{Faculty of Mathematics,
"Al.I. Cuza" University, Bd. Carol I, Iasi,700506, Romania}%
\email{anastas@uaic.ro}%

\subjclass{}%
\keywords{Banach vector bundles, anchor, Second Order Differential Equations, Lie algebroids }%

\begin{abstract}
First, we extend the notion of second order differential equations
(SODE) on a smooth manifold to anchored Banach vector bundles.
Then we define the Banach Lie algebroids as Lie algebroids
structures modeled on anchored Banach vector bundles and prove
that they form a category.
\end{abstract}
\maketitle
\section*{Introduction}

Lie algebroids are related to many areas of geometry and became
recently an object of extensive studies. See [5] for basic
definitions, examples and  references. In 1996 A. Weinstein [6]
proposed some applications of the Lie algebroids in Analytical
Mechanics. New theoretical developments followed. See the survey
[4] by M. de Leon, J. C. Marrero and E. Martinez about Mechanics on
Lie algebroids.

In [1], we gave a construction of a semispray associated to a
regular Lagrangian on a Lie algebroid.

In this paper, we consider the notion of Lie algebroid in the
category of Banach vector bundles, that is vector bundles over
smooth Banach manifolds whose type fibres are Banach spaces. Such
a Banach vector bundle over base $M$ is called anchored if there
exists a morphism from it to the tangent bundle $TM$. First, we
extend the usual notion of second order differential equations
(SODE) to anchored Banach vector bundles and we show that if a
Banach vector bundle admits a homogeneous SODE it is necessarily
anchored. Then we define the Banach Lie algebroids as Lie
algebroid structures modeled on anchored Banach vector bundles.
In our setting only one from three equivalent definitions of a
morphism of Lie algebroids is working. Using it we show that the
Banach Lie algebroids form a category.

\section{Anchored Banach vector bundles}

Let $M$ be a smooth i.e. $C^{\infty}$ Banach manifold modeled on
Banach space $\mathbf{M}$ and let $\pi:E\to M$ be a Banach vector
bundle whose type fiber is a Banach space $\mathbf{E}$. We denote
by $\tau:TM\to M$ the tangent bundle of $M$.
\begin{defn}
We say that the vector bundle $\pi:E\to M$ is an anchored vector
bundle if there exists a vector bundle  morphism  $\rho:E\to TM$.
The morphism $\rho$ will be called the anchor map.
\end{defn}

Let $\cal{F}(M)$ be the ring of smooth real functions on $M$.
We denote  by $\G(E)$ the $\cal{F}(M)$-module of smooth sections
in the vector bundle $(E,\pi,M)$ and by $\cal{X}(M)$ the module of
smooth sections in the tangent bundle of $M$ (vector fields on
$M$).

The vector bundle morphism $\rho$ induces an $\cal{F}(M)$-module
morphism which will be denoted also by $\rho:\G(E)\to \cal{X}(M)$,
$\rho(s)(x) = \rho(s(x)),$ $x\in M, s\in\G(E)$.

Let $\{(U,\v), (V,\psi), \ldots\}$ be an atlas on $M$. Restricting
$U,V$ if necessary we may choose a vector bundle atlas
$\{(\pi^{-1}(U),\overline{\v}), (\pi^{-1}(V), \overline{\psi}),
\ldots \}$ with $\overline{\v}:\pi^{-1}(U)\to U\times \mathbb{E}$
given by $\overline{\v}(u) = (\pi(u), \overline{\v}_{\pi(u)}),$
where $\overline{\v}_{\pi(u)}:E_{\pi(u)}\to \mathbb{E}$ is a
toplinear isomorphism. Here $E_{\pi(u)}$ is the fiber of
$(E,\pi,M)$ in $u\in E.$ The given atlas on $M$ together with a
vector bundle atlas induce a smooth atlas $\{(\pi^{-1}(U),\phi),
(\pi^{-1}(U),\psi),\ldots \}$ on $E$ such that $E$ becomes a
Banach manifold modeled on the Banach space $\mathbb{M}\times
\mathbb{E}$. The map $\phi:\pi^{-1}(U)\to \v(U)\times \mathbb{E}$
is given by
$$
\phi(u) =
(\v(\pi(u)),\overline{\v}_{\pi(u)}(u)),\;\; u \in E.
$$
For a section $s:U \to \pi^{-1}(U)$, its local representation
$\phi\circ s\circ \v^{-1}:\v(U)\to \v(U)\times \mathbb{E}$ given
by $(\phi\circ s\circ \v^{-1})(\v(x)) = (\v \pi(s(x)),
\overline{\v}_{\pi(s(x))}(s(x))=(\v(x), \overline{\v}_{x}(s(x)))$
is completely determined by the map $s_{\v}:\v(U)\to \mathbb{E}$
given by $s_{\v}(\v(s)) = \overline{\v}_{x}(s(x))$ which will be
called the local representative (shortly l.r.) of $s$. On $U\cap
V$ we may speak also of the l.r. $s_{\psi}$ of a section $s:U\cup
V\to \pi^{-1}(U\cap V)$ given by $s_{\psi}(\psi(x)) =
\overline{\psi}_{x}(s(x))$. It is clear that we have
\begin{equation}
s_{\psi}(\psi(x)) = \overline{\psi}_{x}\circ
\overline{\phi}_{x}^{-1}(s_{\v}(\v(x))),\;\; x\in U\cup V.
\end{equation}
For a vector field $X:U \to \tau^{-1}(U)$ we have a l.r.
$X_{\v}:\v(U)\to \mathbb{M}$ and on $U\cap V$ we have also a l.r.
$X_{\psi}$ and one holds
\begin{equation}
X_{\psi}(\psi(x)) = d(\psi\circ
\v^{-1})(\v(x))(X_{\v}(\v(x))),\;\; x\in U\cap V,
\end{equation}
where $d$ means Frechet differentiation.

Locally, $\rho$ reduces to a morphism $U\times \mathbb{E}\to
U\times \mathbb{M}$, $(x,v)\to (x,\rho_{U}(x)v)$ with
$\rho_{U}(x)\in L(\mathbb{E},\mathbb{M})$, the space of continuous
linear maps from $\mathbf{E}$ to $\mathbf{M}$. We call
$\rho_{U}(x)$ the l.r. of $\rho$. On overlaps of local charts one
easily gets
\begin{equation}
\rho_{V}(x)\circ \overline{\psi}_{x}\circ \overline{\v}_{x}^{-1} =
d(\psi\circ \v^{-1})(\v(x))\circ \rho_{U}(x),\;\; x\in U\cap V
\end{equation}

\begin{ex}
\begin{itemize}
\item[1.] The tangent bundle of $M$ is trivially anchored vector
bundle with $\rho = I$ (identity).

\item[2.] Let $A$ be a tensor field of type $(1,1)$ on $M$. It is
regarded as a section of the bundle of linear mappings
$L(TM,TM)\to M$ and also as a morphism $A :TM\to TM$. In the other
words, $A$ may be thought as an anchor map.

\item[3.] Any subbundle of $TM$ is an anchored vector bundle with
the anchor the inclusion map in $TM$.

\item[4.] Let $\pi:E \to M$ be only a submersion. The subspaces
$V_uE = \pi ^{-1}(x),\pi(x) =u$ of $TE$ over $E$ denoted by $VE$
form a subbundle  called the vertical subbundle. By Example 3)
this is an anchored Banach vector bundle.
\end{itemize}
\end{ex}

The anchored vector bundles over the same base $M$ form a
category. The objects are the pairs $(E,\rho_{E})$ with $\rho_{E}$
the anchor of $E$ and a morphism $f: (E,\rho_{E})\to (F,
\rho_{F})$ is a vector bundle morphism $f:E\to F$ which verifies
the condition $\rho_{F}\circ f = \rho_{E}$.

\section{Semisprays in an anchored vector bundle}

Let $(E,\pi, M)$ be an anchored vector bundle with the anchor map
$\rho$ and let $\pi_{*} :TE\to TM$ be the differential (tangent
map) of $\pi.$

We denote by $\tau_{E}:TE \to E$ the tangent bundle of $E$ .

\begin{defn}
A section $S:E\to TE$ will be called a semispray if

\begin{itemize}
\item[(i)] $\tau_{E}\circ S =$ identity on $E$,

\item[(ii)] $\pi_{\ast}\circ S = \rho.$
\end{itemize}
\end{defn}

The condition (i) says that $S$ is a vector field on $E$. The
condition (ii) can be written also in the form
$$
\pi_{*,u}(S(u)) = \rho(u) = (\rho \circ \tau_{E})(S(u)),\;\; u \in
E.
$$
When $E = TM$ and $\rho =$ identity on $TM$, $S$ is simultaneously
a vector field on $TM$ and a section in the vector bundle
$\pi_{*}: TTM\to TM$ i.e. it is a second-order vector field on $M$
in terminology from [2, p.96]. Such a vector field is frequently
called a second order differential equation (SODE) on $M$ or a
semispray .

As we will see below, in our context $S$ is no more related to a
second order differential equation on $M$ and so the corresponding
terminology is inadequate.

Let $c:J \to E$ for $\circ \in J \subset \mathbb{R}$ a curve on
$E$. The differential of $c$ is $c_{*}:J \times \mathbb{R}\to TE$
and using $\imath:J \to J \times \mathbb{R},$ $t\to (t,1),$ $t\in
J$ we set $c^{\prime}(t) =c_{*}\circ \imath.$

It is clear that $\pi\circ c$ is a curve on $M$ and we have that
$(\pi\circ c)^{\prime}(t) = \pi_{*,c(t)}\circ c^{\prime}(t).$

\begin{defn}
A curve $c$ on $E$ will be called admissible if $(\pi\circ
c)^{\prime}(t) = \rho (c(t)),$ $\forall t\in J.$
\end{defn}

Locally, if $c:J \to \v(U)\times E,$ $t\to (x(t), w(t))$ then
$\pi\circ c:J \to \v(U)$ is $t\to x(t)$, $t\in J$ and it follows
that $x$ is an admissible curve if and only if

\begin{equation}
\frac{dx}{dt} = \rho_{U}(x(t))w(t),\;\; t\in J
\end{equation}

\begin{thm}
A vector field $S$ on $E$ is a semispray if and only if all its
integral curves are admissible curves.
\end{thm}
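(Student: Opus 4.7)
The plan is to prove both implications directly from the definitions, using the chain rule $(\pi\circ c)'(t)=\pi_{*,c(t)}\circ c'(t)$ already noted in the text, and to exploit the existence of local integral curves through any point of $E$ (a standard result for $C^\infty$ vector fields on Banach manifolds).

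For the forward implication, I assume $S$ is a semispray and let $c:J\to E$ be one of its integral curves, i.e.\ $c'(t)=S(c(t))$ for all $t\in J$. Composing with $\pi_{*,c(t)}$ and using condition (ii) of Definition 2.1, I get
\[
(\pi\circ c)'(t)=\pi_{*,c(t)}(c'(t))=\pi_{*,c(t)}(S(c(t)))=\rho(c(t)),
\]
which is precisely the admissibility condition of Definition 2.3. So every integral curve of $S$ is admissible.

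For the converse, note that condition (i) of Definition 2.1 holds by hypothesis since $S$ is given as a vector field on $E$; only (ii) requires proof. Fix an arbitrary $u\in E$. By the local existence theorem for ODEs on Banach manifolds applied to the smooth vector field $S$, there is an open interval $J\ni 0$ and an integral curve $c:J\to E$ with $c(0)=u$ and $c'(0)=S(u)$. By hypothesis $c$ is admissible, so $(\pi\circ c)'(0)=\rho(c(0))=\rho(u)$. On the other hand, the chain rule gives $(\pi\circ c)'(0)=\pi_{*,u}(c'(0))=\pi_{*,u}(S(u))$. Equating the two expressions yields $\pi_{*,u}(S(u))=\rho(u)$, and since $u$ was arbitrary, condition (ii) holds and $S$ is a semispray.

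The argument is essentially a formal unraveling of the definitions; the only nontrivial input is the existence of an integral curve through each point, which is where one actually uses the Banach-manifold structure rather than just pointwise linear algebra. No local computations are needed, but as a sanity check one can verify in a chart (using the formula for admissibility and $\pi_*$ in the bundle coordinates $(x,v)$) that the two conditions match the local description $\frac{dx}{dt}=\rho_U(x)w$ from equation (2.4).
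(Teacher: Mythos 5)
Your proof is correct and follows essentially the same route as the paper: the forward direction by composing $c'(t)=S(c(t))$ with $\pi_{*,c(t)}$ and invoking $\pi_*\circ S=\rho$, and the converse by taking the integral curve through an arbitrary $u\in E$ at $t=0$ and equating the two expressions for $(\pi\circ c)'(0)$. If anything, your write-up is slightly more careful than the paper's (which leaves condition (i) implicit and has some notational slips in the converse), but the argument is the same.
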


\begin{proof}
Let $S$ be a semispray. A curve $c:J \to E$ is an integral curve
of $S$ if $c^{\prime}(t) = S(c(t))$. It follows $\pi_{*}\circ
c^{\prime}(t) = (\pi_{*}\circ S)(c(t))$ or $(\pi \circ
c)^{\prime}(t) = \rho(c(t))$, that is $c$ is an admissible curve.
Conversely, let $S$ be a vector field on $E$ whose integral curves
are admissible. For every $u\in E$ there exists an unique integral
curve $c:J \to E$ of $S$ such that $c(0) = u$ and $c^{\prime}(0) =
S(u)$. We have $\pi_{*}\circ c^{\prime}(0) = (\pi_{*}\circ S)(u)$,
$(\pi \circ c)^{\prime}(0) = (\pi_{*}\circ S)(u)$ and
$\pi_{*}\circ S = \rho(u)$ since $c$ is admissible.
\end{proof}

 We restrict to a local chart $(U,\v)$ on $M.$ Then $TU
\simeq \v(U)\times \mathbb{M}$, $E_{|U} \simeq \v(U)\times
\mathbb{E}$ and $TE_{|U} \simeq (\v (U)\times \mathbb{E})\times
\mathbb{M} \times \mathbb{E} $.

The l.r. of a vector field on $E$ is $S_{\v}:\v(U)\times
\mathbb{E} \to \v(U)\times \mathbb{E} \times \mathbb{M}\times
\mathbb{E}$, $S_{\v}(x,u) = (x,u,S_{\v}^{1}(x,u),
S_{\v}^{2}(x,u))$. As l.r. of $\pi_{*}$ is $\v(U)\times
\mathbb{E}\times \mathbb{M}\times \mathbb{E}\to \v(U)\times
\mathbb{M}$, $(x,u,y,v)\to (x,y)$ the condition $\pi_{*}\circ S =
\rho$ translates to $S_{\v}^{1}(x,u) = (x,\rho_{U}(x)u)$. We set
for convenience $S_{\v}^{2}(x,u) = -2 G_{\v}(x,u)$ and so the l.r.
of a semispray for the anchored vector bundle $(E,\pi, M)$ with
the anchor $\rho$ is given as follows:
\begin{equation}
S_{\v}(x,u) = (x,u,\rho_{U}(x)u, - 2G_{\v}(x,u)).
\end{equation}

Let $(V,\psi)$ be another local chart and let us set $h = \psi
\circ \v^{-1}: \v(U\cap V)\to \psi(U\cap V)$. Then $h_{*}:\v
(U\cup V)\times \mathbb{M}\to \psi(U\cap V)\times \mathbb{M}$ is
given by $(x,v)\to (x,dh(x)(v))$, $x\in \v(U\cup V), v\in
\mathbb{M}$.

Let us denote by $H:\v (U\cap V)\times \mathbb{E}\to \psi(U\cap
V)\times \mathbb{E}$ the map given by $H(x,u) = (h(x), M(x)u)$,
where $M(x) = \overline{\psi}_{x}\circ \overline{\v}_{x}^{-1}\in
L(\mathbb{E},\mathbb{E})$. Then $H_{*}$ is locally given as the
pair $(H,H^{\prime})$: $\v(U\cup V)\times \mathbb{E}\times
\mathbb{M}\times \mathbb{E}\to \psi(U\cup V)\times
\mathbb{E}\times \mathbb{M}\times \mathbb{E}$, where the
derivative $H^{\prime}(x,u)$ is given by the Jacobian matrix
operating on the column vector $^{t}(y,w)$ with $y\in \mathbb{M}$
and $w\in \mathbb{E}$. Thus $(H,H^{\prime})$ takes the form
$(x,u,y,v)\to (h(x), M(x)u, h^{\prime}(x)y, M^{\prime}(x)(y)(u) +
M(x)v)$ with prime being denoted the Frechet derivative.

If $S_{\psi}$ is l.r. of $S$ in the chart $(V,\psi)$, necessarily
we have $(H,H^{\prime})\circ S_{\v} = S_{\psi}$ with
$S_{\psi}(x,u) = (h(x), M(x)u, \rho_{U}(h(x))M(x)u, -
2G_{\psi}(h(x), M(x)u))$.

Computing $(H,H^{\prime})\circ S_{\v}$ and identifying with
$S_{\psi}$ one finds
\begin{eqnarray}
\rho_{V}(h(x))M(x)(u) &=& h^{\prime}(x)\rho_{U}(x)(u)\nonumber\\
G_{\psi}(h(x), M(x)u)&=& M(x)G_{\v}(x,u) -
\frac{1}{2}M^{\prime}(x)(\rho_{U}(x)u)u.
\end{eqnarray}
The first equation (2.3) is just (1.3) and the second provides the
connection between the l.r. $G_{\v}$ and $G_{\psi}$ on overlaps.
We have

\begin{thm}
A vector field $S$ on $E$ is a semispray if and only if it has
l.r. $S_{\v}$ in the form (2.2) and the functions involved in
(2.2) satisfy (2.3) on overlaps of local charts.
\end{thm}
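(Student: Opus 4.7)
The plan is to treat the two implications separately; both are essentially read off the local computation that was performed in the paragraphs immediately preceding the statement. I would first organize the notation so that the three ingredients of the claim---the form (2.2), the anchor compatibility, and the $G$-transformation---correspond to the three pieces of data produced by a local representative of a vector field on $E$.

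For the forward direction, fix a chart $(U,\v)$ on $M$ and write the local representative of an arbitrary vector field $S$ on $E$ as $S_{\v}(x,u) = (x,u,S_{\v}^{1}(x,u), S_{\v}^{2}(x,u))$ with $S_{\v}^{1}$ valued in $\mathbb{M}$ and $S_{\v}^{2}$ valued in $\mathbb{E}$. The condition (i) of Definition 2.1 is built into this form. Since the local expression of $\pi_{\ast}$ is the projection $(x,u,y,v)\mapsto(x,y)$, condition (ii), namely $\pi_{\ast}\circ S = \rho$, is immediately equivalent to $S_{\v}^{1}(x,u) = \rho_{U}(x)u$; setting $S_{\v}^{2}(x,u) = -2G_{\v}(x,u)$ yields (2.2). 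Then I would exploit the fact that $S$ is a global section: on an overlap $U\cap V$ the local representatives must match under the transition map $(H,H^{\prime})$ constructed above the theorem, i.e. $(H,H^{\prime})\circ S_{\v} = S_{\psi}\circ H$. Substituting the explicit formula for $(H,H^{\prime})$ and comparing the third and fourth components is precisely the identification that produced (2.3), so both relations follow.

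For the converse direction, assume a vector field $S$ whose local representative has the form (2.2) and whose coefficients satisfy (2.3) on overlaps. The first equation of (2.3) is nothing but the cocycle condition (1.3) already guaranteed by the fact that $\rho$ is a vector bundle morphism, and is therefore automatic; the genuine compatibility requirement is the second equation, which ensures that the local expressions (2.2) are compatible under $(H,H^{\prime})$ and hence glue into a well-defined global section $S:E\to TE$. Once $S$ is globally defined, condition (i) of Definition 2.1 holds by construction, and reading off the third slot of (2.2) gives $\pi_{\ast}\circ S = \rho$, i.e. condition (ii).

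The main obstacle I expect is purely bookkeeping: carrying out the comparison $(H,H^{\prime})\circ S_{\v} = S_{\psi}\circ H$ cleanly enough that the two equations of (2.3) separate without the Fr\'echet derivative $M^{\prime}(x)(\rho_{U}(x)u)u$ being obscured by the other terms. Once the coordinate change of a vector field on the total space $E$ is organized in the form $(H,H^{\prime})$ as in the text, the rest of the theorem is a direct translation between the intrinsic description of a semispray and its local data.
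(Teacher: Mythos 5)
Your proposal is correct and follows essentially the same route as the paper: the forward direction is exactly the local computation carried out in the paragraphs preceding the theorem (the paper's proof simply refers back to it), and your converse via gluing the local data is the argument the paper dismisses as ``obvious.'' Your observation that the first equation of (2.3) is automatic from (1.3) and that the compatibility $(H,H^{\prime})\circ S_{\v} = S_{\psi}\circ H$ is the precise form of the overlap condition is a welcome clarification, but not a different proof.
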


\begin{proof}
The "if part" was proven in the above. The converse is
obvious.
\end{proof}

We denote by $h_{\l}:E \to E$, $h_{\l}(u_{x}) =\l u_{x}$, $\l \in
\mathbb{R}$, $\l >0, x\in M,$ the homothety of factor $\l.$

\begin{defn}
We say that a semispray $S$ is a spray if the following equality
holds
\begin{equation}
S\circ h_{\l} = \l (h_{\l})_{*}\circ S.
\end{equation}
Locally, (2.4) is equivalent to
\begin{equation}
G_{\v}(x, \l v) = \l^{2}G_{\v}(x,v),\;\; (x,v)\in U \times
\mathbb{E}.
\end{equation}
\end{defn}

Indeed, $(S\circ h_{\l})(u) = S(\l u) = (x, \l v, \rho_{U}(\l v),
- 2G_{\v}(x,\l v)$ and $\l(h_{\l})_{*}S(u) = (x, \l v, \l
\rho_{U}(v), - 2\l^{2}G_{\v}(x,\l v))$. Since $\rho_{U}$ is a
linear mapping, (2.4) implies (2.5) and conversely.

We look at (2.5). If we fix $x\in U$ and omit the index $\v$ we
get a mapping $G:\mathbb{E}\to \mathbb{E}$ that verifies $G(\l v)
= \l^{r}G(v)$ for all $\l>0$ and $r=2.$ We say that such a map is
positively homogeneous of degree $r$.

For such mapping the following Euler type theorem holds.

\begin{thm}
Suppose that a mapping $G: \mathbb{E}\to \mathbb{E}$ is
differentiable away from the origin of $\mathbb{E}$. Then the
following two statements are equivalent:
\begin{itemize}
\item[(i)] $G$ is positively homogeneous of degree $r$,

\item[(ii)] $dG_{v} (v) = r G(v)$, for all $v\in
\mathbb{E}\backslash \{0\}$.
\end{itemize}
\end{thm}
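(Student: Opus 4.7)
The plan is to prove this as the Banach-space analogue of Euler's homogeneous-function theorem, handling the two implications separately by reducing everything to one-variable calculus for $\mathbb{E}$-valued functions.

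For $(i) \Rightarrow (ii)$: I would fix $v \in \mathbb{E} \setminus \{0\}$ and consider the scalar-parameter map $\lambda \mapsto G(\lambda v)$ on $(0, \infty)$. By (i) this coincides with $\lambda \mapsto \lambda^{r} G(v)$. Differentiating both sides at $\lambda = 1$, the chain rule on the left produces $dG_{v}(v)$, since $\frac{d}{d\lambda}(\lambda v)\big|_{\lambda = 1} = v$, while the right-hand side yields $r G(v)$. This settles the easy direction.

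For $(ii) \Rightarrow (i)$: Fix $v \ne 0$ and set $\varphi(\lambda) = G(\lambda v)$ for $\lambda > 0$. The chain rule gives $\varphi'(\lambda) = dG_{\lambda v}(v)$. Applying (ii) at the base point $\lambda v$ yields $dG_{\lambda v}(\lambda v) = r G(\lambda v)$, and the linearity of the Frechet derivative lets me pull out the scalar $\lambda$, producing $dG_{\lambda v}(v) = (r/\lambda) G(\lambda v) = (r/\lambda) \varphi(\lambda)$. Hence $\varphi$ solves the linear ODE $\varphi'(\lambda) = (r/\lambda)\varphi(\lambda)$ on $(0, \infty)$, with initial value $\varphi(1) = G(v)$. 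Using the integrating factor $\lambda^{-r}$, I compute
\begin{equation*}
\frac{d}{d\lambda}\bigl(\lambda^{-r} \varphi(\lambda)\bigr) = -r \lambda^{-r-1} \varphi(\lambda) + \lambda^{-r} \cdot \frac{r}{\lambda}\, \varphi(\lambda) = 0,
\end{equation*}
so $\lambda^{-r} \varphi(\lambda)$ is constant on the connected interval $(0, \infty)$, with value $\varphi(1) = G(v)$. This gives $G(\lambda v) = \lambda^{r} G(v)$ for all $\lambda > 0$, which is exactly positive homogeneity of degree $r$.

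I do not foresee any genuine obstacle here. The only point that deserves explicit attention is that $\varphi$ is Banach-space-valued while $\lambda$ is a real parameter, so every manipulation is really one-variable calculus for vector-valued maps. Because the ODE has a scalar coefficient, the integrating-factor argument and the fundamental theorem of calculus transfer verbatim, and no Banach-specific machinery beyond the Frechet chain rule is required.
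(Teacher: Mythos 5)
Your proposal is correct and follows essentially the same route as the paper: differentiating $G(\lambda v)=\lambda^{r}G(v)$ in $\lambda$ for the forward direction, and for the converse using the chain rule to show $\lambda\mapsto G(\lambda v)$ solves $\varphi'(\lambda)=(r/\lambda)\varphi(\lambda)$, then integrating with the factor $\lambda^{-r}$. Your write-up is in fact slightly cleaner in the forward direction, where you evaluate directly at $\lambda=1$ instead of the paper's intermediate identity.
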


\begin{proof}
Suppose (i) holds. Fix $v\in \mathbb{E}$ and differentiate the
equation $G(\l y) = \l^{r}G(v)$ with respect to the parameter
$\l.$ We get $dG_{\l v} (\l v) = r\l^{r-1}G(v)$ and for $\l=1,$
$dG_{v}(v) = rG(v),$ that is (ii) holds.

Conversely, suppose (ii), fix $v$ and consider the mapping $\l \to
G(\l v)$ with $\l>0.$ By the chain rule, we have $\frac{dG(\l v
)}{d\l} = dG_{\l v}(v) = \frac{1}{\l}dG_{\l v}(\l v) =
\frac{r}{\l}G(\l v)$, that is the mapping $\l\to G(\l v)$ is a
solution of the differential equation:
$$
\frac{d}{d\l}G(\l v) - \frac{r}{\l}G(\l v) = 0.
$$
The integrating factor $\frac{1}{\l^{r}}$ then gives $G(\l v) =
\l^{r}C,$ where $C$ is depending on our fixed $v$. Setting $\l=1,$
we get $C = G(v)$ and so $G(\l v) = \l^{r}G(v),$ that is (i)
holds, q.e.d.
\end{proof}

The proof of Theorem 2.6 shows also that if $G:\mathbb{E}\to
\mathbb{E}$ is of class $C^{1}$ on $\mathbb{E}$ and positively
homogeneous of degree 1, then it is linear and $G(v) = dG_{v}(v)$.
Moreover, if $G$ is $C^{2}$ on $\mathbb{E}$ and is positively
homogeneous of degree 2, then it is quadratic, that is $2G(v) =
d_{v}^{2}G(v,v).$

Returning to the (2.5) we note that if $G_{\v}$ is of class
$C^{2}$ in the points $(x,0)$, then it is quadratic in $v$.

Thus $S$ satisfying (2.4) reduces to a quadratic spray. For
avoiding this reduction we have to delete from $E$ the image of
the null section in the vector bundle $\pi: E \to M.$

Now, we show that if for a vector bundle $E\to M$ there exists a
vector field $S_{0}$ on $E$ that satisfies (2.4) then $\pi:E \to
M$ is an anchored vector bundle and $S_{0}$ is a spray.

Let be $S_{0}(x,v) = (x,v, S_{01}(x,v), S_{0,2}(x,v))$ in a local
chart on $E$. Then $S_{0}(h_{\l}u) = S_{0}(x,\l v) = (x, \l v,
S_{01}(x, \l v ), S_{02}(x,\l v))$ and $(h_{\l})_{*}S_{0}(u) = (x,
\l v, S_{01}(x, v), \l S_{02}(x,v))$. The condition (2.4) implies
$S_{01}(x,\l v) = \l S_{01}(x,v)$ and $S_{02}(x, \l v) =
\l^{2}S_{02}(x,v)$. It follows that $S_{01}$ is a linear map with
respect to $v$. Hence we may put $S_{01}(x,v) = \rho_{U}(x)v,$
$\rho_{U}(x): \mathbb{E}\to \mathbb{M}$. Using $\{\rho_{U}(x),
x\in M \}$ one defines a morphism $\rho:E \to TM.$ Thus $E\to M$
is an anchored vector bundle. As $(\pi_{*}S_{0})(u) = (x,
S_{01}(x,v)) = (x, \rho_{U}(x)v)$ we have $\pi_{*}\circ S_{0}
=\rho$ and as $\tau_{E}\circ S_{0}$=indentity automatically holds
it follows that $S_{0}$ is a spray.

\section{Category of Banach Lie algebroids}

Let $\pi:E \to M$ be an anchored Banach vector bundle with the
anchor $\rho_{E}:E \to TM$ and the induced morphism
$\rho_{E}:\G(E)\to \mathcal{X}(M)$.

Assume there exists defined a bracket $[,]_{E}$ on the space
$\G(E)$ that provides a structure of real Lie algebra on $\G(E)$.

\begin{defn}
The triplet $(E, \rho_{E}, [,]_{E})$ is called a Banach Lie
algebroid if
\begin{itemize}
\item[(i)] $\rho: (\G(E), [,]_{E})\to (\mathcal{X}(M), [,])$ is a
Lie algebra homomorphism and

\item[(ii)] $[s_{1}, s_{2}]_{E} = f[s_{1}, s_{2}]_{E}
+\rho_{E}(s_{1})(f)s_{2}$, for every $f\in \mathcal{F}(M)$ and
$s_{1}, s_{2}\in \G(E)$.
\end{itemize}
\end{defn}

\begin{ex}
\begin{itemize}
\item[1.] The tangent bundle $\tau: TM\to M$ is a Banach Lie
algebroid with the anchor the identity map and the usual Lie
bracket of vector fields on $M$.

\item[2.] For any submersion $\pi:E \to M$, the vertical bundle
$VE$ over $E$ is an anchored Banach vector bundle. As the Lie
bracket of two vertical vector fields is again a vertical vector
field it follows that $(VE, i, [,]_{VE})$, where $i:VE \to TE$ is
the inclusion map, is a Banach Lie algebroid. This applies, in
particular, to any Banach vector bundle $\pi:E \to M.$
\end{itemize}
\end{ex}

Let $\Omega^{q}(E):= \G(\L^{q}A^{*})$ be the $\mathcal{F}(M)-$
module of differential forms of degree $q$. In particular,
$\O^{q}(TM)$ will be denoted by $\O^{q}(M)$. The differential
operator $d_{E}:\O^{q}(E)\to \O^{q+1}(E)$ is given by the formula
\begin{eqnarray}
&&(d_{A} \o)(s_{0}, \ldots, s_{q})  =
\sum_{i=0,\ldots,n}(-1)^{i}\rho_{E}(s_{i})\o(s_{0}, \ldots,
\widehat{s}_{i}, \ldots, s_{q})\nonumber\\&&+ \sum_{0\leq i< j
\leq q} (-1)^{i+j}\o ([s_{i},s_{j}]_{E}), s_{0}, \ldots
\widehat{s}_{i}, \ldots , \widehat{s}_{j}, \ldots, s_{q}
\end{eqnarray}
for $s_{1}, \ldots, s_{q}\in \G(E)$, where hat over a symbol means
that that symbol must be deleted.

For Lie algebroids constructed on vector bundles with finite
dimensional fibres there exist three different but equivalent
notions of morphisms.

For Banach Lie algebroids only one of them is working. We give it
here. For a detailed discussion on Lie algebroids morphisms see
[3]. Let $(E^{\prime}, \pi^{\prime}, M)$ be a Banach vector bundle
and $(E^{\prime}, \rho_{E^{\prime}}, [,]_{E^{\prime}})$ a Banach
Lie algebroids based on it.

\begin{defn}
A vector bundle morphism $f:E \to E^{\prime}$ over $f_{0}:M\to
M^{\prime}$ is a morphism of the Banach Lie algebroids $(E,
\rho_{E}, [,]_{E})$ and $(E^{\prime}, \rho_{E^{\prime}},
[,]_{E^{\prime}})$ if the map induced on forms
$f^{*}:\O^{q}(E^{\prime})\to \O^{q}(E)$ defined by
$(f^{*}\o^{\prime})_{x}(s_{1}, \ldots, s_{q}) =
\o^{\prime}_{f_{0}(x)}(fs_{1}, \ldots, fs_{q})$, $s_{1}, \ldots,
s_{2}\in \G(E)$ commutes with the differential i.e.
\begin{equation}
d_E \circ f^{*} = f^{*}\circ d_{E}.
\end{equation}
\end{defn}

Using this definition it is easy to prove

\begin{thm}
The Banach Lie algebroids with the morphisms defined in the above,
form a category.
\end{thm}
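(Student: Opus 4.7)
The plan is to verify the four axioms of a category: existence of identity morphisms, closure under composition, associativity, and identity laws. Since the underlying Banach vector bundle morphisms already form a category, associativity of composition and the identity laws over-base follow automatically; what I must check is that (a) the identity vector bundle morphism satisfies the defining compatibility (3.2), and (b) the composition of two morphisms in the sense of Definition 3.3 is again such a morphism.

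For (a), the identity $\text{id}_E: E \to E$ over $\text{id}_M$ induces on forms the identity operator $\text{id}_E^*: \Omega^q(E) \to \Omega^q(E)$ by direct inspection of the defining formula $(f^*\omega)_x(s_1,\ldots,s_q) = \omega_{f_0(x)}(fs_1,\ldots,fs_q)$. Then (3.2) holds trivially because $d_E$ commutes with the identity operator. For (b), let $f: E \to E'$ over $f_0: M \to M'$ and $g: E' \to E''$ over $g_0: M' \to M''$ be two Banach Lie algebroid morphisms. The essential algebraic fact is the contravariant functoriality $(g\circ f)^* = f^* \circ g^*$ on forms; this is verified by unwinding the definition, since for $\omega'' \in \Omega^q(E'')$, $x \in M$, and $s_1,\ldots,s_q \in \Gamma(E)$ one has
\begin{equation*}
(f^*(g^*\omega''))_x(s_1,\ldots,s_q) = \omega''_{(g_0\circ f_0)(x)}\bigl((g\circ f)s_1,\ldots,(g\circ f)s_q\bigr) = ((g\circ f)^*\omega'')_x(s_1,\ldots,s_q).
\end{equation*}
Using this together with the hypotheses $d_E \circ f^* = f^* \circ d_{E'}$ and $d_{E'} \circ g^* = g^* \circ d_{E''}$, I compute
\begin{equation*}
d_E \circ (g\circ f)^* = d_E \circ f^* \circ g^* = f^* \circ d_{E'} \circ g^* = f^* \circ g^* \circ d_{E''} = (g\circ f)^* \circ d_{E''},
\end{equation*}
so the composition $g\circ f$ is again a morphism of Banach Lie algebroids.

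The only delicate point, and hence the place requiring care, is justifying that the pullback on forms is well defined in the Banach setting and that the functoriality identity above genuinely holds; however, both assertions are purely pointwise on fibres and mirror the standard finite-dimensional argument, so no analytic subtlety intrudes. With (a) and (b) established, associativity and the identity laws are inherited from the ambient category of Banach vector bundle morphisms, completing the verification that Banach Lie algebroids form a category.
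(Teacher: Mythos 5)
Your proposal is correct and carries out exactly the routine verification that the paper leaves to the reader (the paper merely asserts "it is easy to prove" and gives no argument): identity morphisms pull back to the identity on forms, and contravariant functoriality $(g\circ f)^{*}=f^{*}\circ g^{*}$ combined with the two commutation hypotheses yields closure under composition, with associativity and unit laws inherited from Banach vector bundle morphisms. Note only that the paper's displayed condition (3.2) contains a typo ($d_{E}\circ f^{*}=f^{*}\circ d_{E}$ should read $d_{E}\circ f^{*}=f^{*}\circ d_{E'}$), and you have correctly used the intended version.
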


\end{document}